\def\today{\ifcase\month\or
  January\or February\or March\or April\or May\or June\or
  July\or August\or September\or October\or November\or December\fi
  \space\number\day, \number\year}
\DeclareMathOperator{\sgn}{\mathrm{sgn}}
\newtheorem{theorem}{Theorem}
\newtheorem{lemma}{Lemma}
\newtheorem{proposition}[theorem]{Proposition}
\theoremstyle{definition}
\theoremstyle{remark}
\newcommand{\T}{\mathbb{T}}
\newcommand{\C}{\mathbb{C}}
\newcommand{\R}{\mathbb{R}}
\newcommand{\Q}{\mathbb{Q}}
\newcommand{\Z}{\mathbb{Z}}
\newcommand{\ba}{{\boldsymbol{a}}}
\newcommand{\bu}{{\boldsymbol{u}}}
\newcommand{\bs}{{\boldsymbol{s}}}
\newcommand{\dt}{\text{\rm d}t}
\renewcommand{\d}{\text{\rm d}}
\newcommand{\la}{\lambda}
\newcommand{\f}[1]{\lfloor #1 \rfloor}
\begin{document}


\title[]{A Universality law for sign correlations\\ of eigenfunctions of differential operators}

\author[]{Felipe Gon\c{c}alves}
\address{
        Hausdorff Center for Mathematics,
        53115 Bonn, Germany}
\email{goncalve@math.uni-bonn.de}

\author[]{Diogo Oliveira e Silva}
\address{
     School of Mathematics, University of Birmingham, Edgbaston, Birmingham,
B15 2TT, England}
\email{d.oliveiraesilva@bham.ac.uk}

\author[]{Stefan Steinerberger}
\address{
Department of Mathematics\\
        Yale University\\
        New Haven, CT 06511, USA}
\email{stefan.steinerberger@yale.edu}

\thanks{D. O. S. acknowledges partial support by the College Early Career Travel Fund of the University of Birmingham.
 S.S. is supported by the NSF (DMS-1763179) and the Alfred P. Sloan Foundation.}

\subjclass[2010]{34C10, 34L20}
\keywords{Schr\"odinger operator, sign correlation limit, universality law, Hermite polynomials.}

\allowdisplaybreaks


\begin{abstract} We establish a universality law for sequences of functions $\{w_n\}_{n \in \mathbb{N}}$ satisfying a form of WKB approximation on compact intervals. This includes eigenfunctions of generic Schr\"odinger operators, as well as Laguerre and Chebyshev polynomials. Given two distinct points $x, y \in \mathbb{R}$, we ask how often do $w_n(x)$ and $w_n(y)$ have the same sign. Asymptotically, one would expect this to be true half the time, but this turns out to not always be the case.
Under certain natural assumptions, we prove that, for all $x \neq y$,
$$  \frac{1}{3} \leq  \lim_{N \rightarrow \infty}{ \frac{1}{N} \#  \left\{0 \leq n < N:  \sgn(w_n(x)) = \sgn(w_n(y)) \right\}} \leq \frac{2}{3},$$
and that these bounds are optimal, and can be attained.
Our methods extend to other questions of similar flavor and we also discuss a number of open problems.
\end{abstract}


\maketitle


\section{Introduction}

\subsection{Setup}
This paper is concerned with a simple and surprising property exhibited by the sequence of eigenfunctions 
for the eigenvalue problem of certain differential operators. Consider, on the real line, the Schr\"odinger operator associated to the potential $V$, 
\begin{equation}\label{eq:SchrOp}
H=- \frac{\d^2}{\d x^2}+V(x).
\end{equation}
Here, $V:\R\to\R$ is some increasing function satisfying $V(x) \rightarrow \infty$, as $|x| \rightarrow \infty$.

\begin{center}
\begin{figure}[h!]\begin{tikzpicture}[scale=2.2]
      \draw[->] (-2.2,0) -- (2.2,0) node[right] {$x$};
      \draw[->] (0,0) -- (0,1.2) node[above] {$y$};
      \draw[scale=0.5,domain=-3:3,smooth,variable=\x, thick, dashed] plot ({\x},{0.2*\x*\x});
      \draw[scale=0.5,domain=-3:3,smooth,variable=\x,ultra thick] plot ({\x},{exp(-\x*\x)});
      \draw[scale=0.5,domain=-3:3,smooth,variable=\x,ultra thick] plot ({\x},{2*\x*exp(-\x*\x)});
      \draw[scale=0.5,domain=-3:3,smooth,variable=\x,ultra thick] plot ({\x},{(4*\x*\x - 2)*exp(-\x*\x)});
    \end{tikzpicture}
\vspace{-5pt}
\caption{ The potential $V(x) = x^2$ (dashed) and the first three eigenfunctions of the quantum harmonic oscillator.}
\label{fig:QuadraticPotential}
\end{figure}
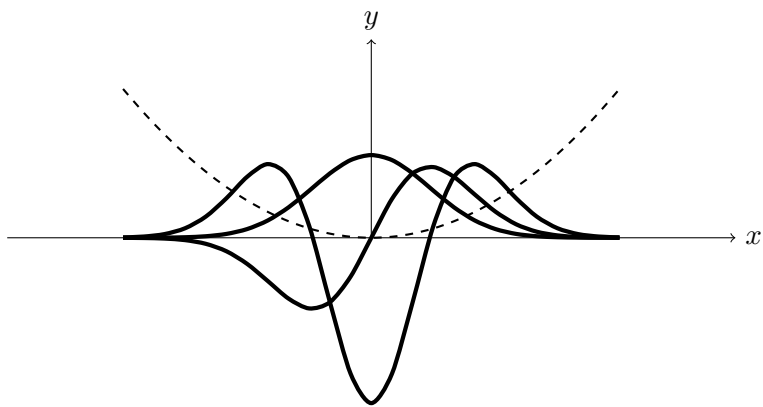
\end{center}

The eigenvalue problem
\begin{equation}\label{eq:EgvProb}
H(w_n)=\lambda_n w_n
\end{equation}
has been studied extensively, the simpler case $V(x) = x^2$ corresponding to the quantum harmonic oscillator
whose eigenfunctions are given by the Hermite functions (see Figure \ref{fig:QuadraticPotential}). It is well understood that, as the eigenvalues become large, the second derivative dominates, and the eigenfunctions start to look locally like trigonometric functions.
This phenomenon is known as {\it WKB approximation}, named after Wentzel, Kramers, and Brillouin. 

The purpose of our paper is to establish a rather surprising universality statement for sign correlations of sequences of functions for which a kind of WKB approximation holds.
Our starting point is  very simple to state: 
 given two distinct points $x, y \in \mathbb{R}$, how often do $w_n(x)$ and
$w_n(y)$ have the same sign? More precisely, we are interested in the {\it sign correlation limit}, defined as
\begin{equation}\label{eq:SCL} 
\ell_{\{w_n\}}(x,y):=\lim_{N \rightarrow \infty}{ \frac{1}{N} \#  \left\{0 \leq n < N:   \sgn(w_n(x))=\sgn(w_n(y)) \right\}}.
\end{equation}
One could be tempted to conjecture that, in the high frequency limit, the two points $x,y$ decouple and the corresponding signs behave essentially like independent Bernoulli random variables, thus exhibiting the same sign in roughly half of the cases. 
This seemingly natural conjecture turns out to be a good guess
for the generic behavior of the system. 
However, earlier work of the authors \cite{us3} hinted at the possible existence of an exceptional set exhibiting a different kind of behavior, and motivated the present paper.

\subsection{Main Result.} 
A sequence $\{a_n\}\subseteq [0,1]$ is said to be {\it equidistributed in $[0,1]$} if, for any subinterval $[c,d]\subseteq [0,1]$,
$$\lim_{N\to\infty} \frac1N \# \{0\leq n<N:  a_n\in[c,d] \}={d-c}.$$
A sequence $\{a_n\}\subseteq \R$ is said to be {\it equidistributed modulo 1} if the sequence of the fractional parts $\{a_n -\f{a_n}\}$ is equidistributed in  $[0,1]$.
Our first main result applies to a sequence of functions obeying a certain asymptotic behavior
which is inspired by the WKB approximation, and is satisfied by several classical objects 
(see the examples in \S \ref{examples}). 
Regarding notation, $o_n(1)$ will denote a quantity that tends to 0, as  $n\to\infty$. We will also write 
$a_n=O(b_n)$, or $|a_n|\lesssim |b_n|$, if there exists a constant $C<\infty$ (independent of $n$) such that $|a_n|\leq C |b_n|$, for every $n$.

\begin{theorem}[Main Result] \label{mainthm}
 Given $D \subseteq \mathbb{R}$, let $w_n: D \rightarrow \mathbb{R}$ be a sequence of functions satisfying
\begin{equation}\label{eq:H1asympt}
 w_n(x) = \left(1 + o_n(1) \right) \phi(x,n) \cos{\left( 2\pi (\mu_n \varphi(x) - \theta) \right)},
 \end{equation}
for every $x \in D$ and some $\{\mu_n\} \subset \mathbb{R}$, $\theta\in\R$, and function $\phi:D\times\mathbb{N}\to \R$.
Consider distinct points $x, y \in D$ such that
$\varphi(x) \neq \pm \varphi (y)$ and $\phi(x,n)\phi(y,n)>0$ for all $n$.
If the sequences $\{p^{-1}\mu_n\varphi(x)\}$ and $\{q^{-1}\mu_n\varphi(y)\}$ are equidistributed modulo 1 for any $p,q\in \Z\setminus \{0\}$, then the sign correlation limit \eqref{eq:SCL}  exists, and satisfies
\begin{equation}\label{eq:upperlower}
  \frac{1}{3} \leq  \ell_{\{w_n\}}(x,y) \leq \frac{2}{3}.
  \end{equation}
Moreover, these constants are optimal.
\end{theorem}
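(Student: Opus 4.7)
The WKB asymptotic \eqref{eq:H1asympt} and the assumption $\phi(x,n)\phi(y,n)>0$ imply that the product $\sgn(w_n(x))\sgn(w_n(y))$ coincides with $\sgn(\cos 2\pi\alpha_n)\sgn(\cos 2\pi\beta_n)$---where $\alpha_n := \mu_n\varphi(x)-\theta$ and $\beta_n := \mu_n\varphi(y)-\theta$---outside an index set of asymptotic density zero (those $n$ on which one of the cosine arguments approaches a half-integer closely enough for the $o_n(1)$ factor to tip the sign). Setting $f(t) = \sgn(\cos 2\pi t)$, the sign correlation limit becomes $\ell_{\{w_n\}}(x,y) = \tfrac{1}{2}+\tfrac{1}{2}L$, with $L := \lim_N N^{-1}\sum_{n<N} f(\alpha_n)f(\beta_n)$, so \eqref{eq:upperlower} is equivalent to showing $|L|\le 1/3$.

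\textbf{Fourier expansion and identification of resonances.} Substitute the Fourier series $f(t) = \frac{4}{\pi}\sum_{j\ge 0}\frac{(-1)^j}{2j+1}\cos 2\pi(2j+1)t$ into $f(\alpha_n)f(\beta_n)$ and apply the product-to-sum identity. The result is a double series whose generic term is a Cesaro mean of $\cos 2\pi(\mu_n\Lambda^\varepsilon_{j,k}-\theta\Theta^\varepsilon_{j,k})$, where
$$\Lambda^\varepsilon_{j,k} := (2j+1)\varphi(x)+\varepsilon(2k+1)\varphi(y),\qquad \Theta^\varepsilon_{j,k} := (2j+1)+\varepsilon(2k+1),\qquad \varepsilon\in\{\pm 1\}.$$
For every $\Lambda^\varepsilon_{j,k}\ne 0$, the equidistribution hypothesis---applied to the corresponding rational multiple of $\varphi(x)$ or $\varphi(y)$ when $\varphi(y)/\varphi(x)\in\Q$, and supplemented in the irrational-ratio regime by the joint equidistribution of $(\mu_n\varphi(x),\mu_n\varphi(y))$ on the torus---forces this mean to zero. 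The only potentially surviving resonant terms are those with $\Lambda^\varepsilon_{j,k}=0$, which forces $\varphi(y)/\varphi(x)=-\varepsilon(2j+1)/(2k+1)$, a nonzero ratio of two positive odd integers.

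\textbf{Computing and bounding the resonant contribution.} If no resonances occur---e.g., $\varphi(y)/\varphi(x)$ is irrational or is a rational with an even numerator or denominator---then $L=0$, so $\ell_{\{w_n\}}(x,y)=1/2\in[1/3,2/3]$. Otherwise write $\varphi(y)/\varphi(x)=\sigma a/b$ with $\gcd(a,b)=1$, $a,b$ positive and odd, $\sigma\in\{\pm 1\}$, and $a/b\ne 1$ (excluded by $\varphi(x)\ne\pm\varphi(y)$). The resonances parametrize as $(2j+1,2k+1)=m(a,b)$ with $m$ a positive odd integer; using that $(-1)^{m(a+b)/2}$ is independent of odd $m$ (since $a+b$ is even), the resonant sum collapses to a scalar multiple of
$$\sum_{m\text{ odd}\ge 1}\frac{\cos(m\phi_0)}{m^2},\qquad \phi_0 := 2\pi\theta(a-\sigma b).$$
The classical identity $\sum_{m\text{ odd}\ge 1}m^{-2}\cos(m\phi)=\pi^2/8-\pi|\phi|/4$ on $[-\pi,\pi]$ (extended $2\pi$-periodically) bounds this sum by $\pi^2/8$ in absolute value, and tracking constants yields $|L|\le (ab)^{-1}$. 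Since $a,b$ are coprime positive odd integers with $a\ne b$, the minimum value of $ab$ is $3$, giving $|L|\le 1/3$.

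\textbf{Optimality and main obstacle.} The extremal configuration is $\varphi(y)=3\varphi(x)$, i.e., $(a,b)=(1,3)$, $\sigma=+$: taking $\theta=0$ gives $\phi_0=0$, producing $L=-1/3$ and $\ell_{\{w_n\}}(x,y)=1/3$, while choosing $\theta$ so that $\phi_0=\pi$ yields $L=+1/3$ and $\ell_{\{w_n\}}(x,y)=2/3$. The principal technical difficulty is the vanishing of the non-resonant Cesaro means in the irrational-ratio regime, where $\Lambda^\varepsilon_{j,k}$ need not be a rational multiple of either $\varphi(x)$ or $\varphi(y)$; this must be handled either by interpreting the hypothesis as providing joint equidistribution of $(\mu_n\varphi(x),\mu_n\varphi(y))$ on the torus, or, if one retains only the marginal form, by exploiting the $(2j+1)^{-1}(2k+1)^{-1}$ decay of the Fourier coefficients to truncate the double sum and supplement the argument with direct equidistribution estimates for each finite collection of linear combinations $p\varphi(x)+q\varphi(y)$ that appears.
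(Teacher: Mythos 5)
Your proposal follows, in essence, the same route as the paper: replace $w_n$ by the pure cosine model, use equidistribution to turn the counting problem into an average, expand the square wave $\sgn(\cos 2\pi t)$ in Fourier series, and observe that only the resonances $(2j+1)\varphi(x)=\pm(2k+1)\varphi(y)$ --- possible only when $\varphi(x)/\varphi(y)=p/q$ with $p,q$ coprime and both odd --- survive, leaving the lacunary sum $\sum_{m \text{ odd}}\cos(m\phi_0)/m^2$, which is bounded by $\pi^2/8$ and yields $|L|\leq 1/|pq|\leq 1/3$. Your resonance condition, the extremal configuration $\varphi(x)=3\varphi(y)$, and the attainment of both endpoints all coincide with the paper's Lemma \ref{lem:Fourier} and its use in the proof. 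The difference is the order of operations: the paper first converts the discrete average into the integral of the $1$-periodic, Riemann-integrable step function $g(t)=\Psi(\lambda t\ba)$ over a closed geodesic $t\mapsto(pt-\theta,qt-\theta)$ on $\T^2$ (Lemma \ref{limit-lemma}), and only then Fourier-expands; Weyl's theorem applies directly to $g$, and the single integral is evaluated by Parseval. Your version Fourier-expands first and then takes the Ces\`aro limit termwise, which requires interchanging $N\to\infty$ with a double series whose coefficients decay only like $((2j+1)(2k+1))^{-1}$ and are not absolutely summable; a Fej\'er-type truncation (or the paper's detour) is needed to make that step airtight.

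The difficulty you flag in the irrational-ratio regime is genuine, and you are right to isolate it: marginal equidistribution of $\{p^{-1}\mu_n\varphi(x)\}$ and $\{q^{-1}\mu_n\varphi(y)\}$ does not give joint equidistribution of $(\mu_n\varphi(x),\mu_n\varphi(y))$ on $\T^2$, so your proposed fallback (truncation plus the marginal hypotheses) cannot close that case. Note that the paper has the same issue: it dismisses the irrational case by citing Lemma \ref{lem:Ergodic}, which concerns continuous time averages along a densely wound line, not discrete averages over $\{\mu_n\}$. Indeed, taking $\mu_n=n$, $\varphi(x)=\sqrt 2$, $\varphi(y)=\sqrt2+1$ satisfies every stated hypothesis with irrational ratio, yet $u_n(x)=u_n(y)$ identically and $\ell=1$. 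So the joint-equidistribution reading of the hypothesis is the one under which the statement (and your argument, and the paper's) is correct; under that reading your proof is complete modulo the summation-interchange point above.
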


We believe this result to be rather surprising.
In particular, it establishes the existence of  correlations different from $\frac12$. These correlations are, however, universally bounded away from both 0 and 1. Theorem \ref{mainthm} motivates a number of natural questions, see \S 1.4 below.


\subsection{Sharper asymptotics.} The sign correlation limit can be computed {\it exactly} in a number of situations of interest.
We proceed to describe one such situation.
 Let $V\in L^1_{\text{loc}}(\R)$ be a locally integrable potential such that $V(x)\to \infty$, as $|x|\to \infty$, and assume $V$ to be bounded from below, 
 \begin{equation}\label{eq:essinf}
 \text{ess inf}_{x\in\R} V(x)>-\infty.
 \end{equation} 
We renormalize the Hamiltonian by setting $H_V=-\frac{1}{4\pi^2}\frac{d^2}{dx^2}+V(x)$ (this is adapted to our choice of normalization for the Fourier transform, see \eqref{eq:FourierNormalization} below). Under these conditions,  the operator $H_V$ given by \eqref{eq:SchrOp} is known to have compact resolvent.
 In particular, $H_V$ has purely discrete spectrum and a complete set of eigenfunctions, see \cite[Theorem XIII.67]{RS72}. 
 This means that there exists an orthogonal basis $\{w_n\}$ of $L^2(\R)$ such that  $H_V (w_n)=\lambda_n w_n$, where the eigenvalues $\{\lambda_n\}$ form a non-decreasing sequence satisfying $\lambda_n\to \infty$, as $n\to \infty$. 
 In addition, we require $V$ to be an even function. This implies that the basis $\{w_n\}$ naturally splits into even and odd functions, since these subspaces are $H_V$-invariant. In particular, we can reorder the basis elements in such a way that $w_n$ is an even function if $n$ is even, and an odd function if $n$ is odd. After doing so, the sequence $\{\lambda_n\}$ may no longer be non-decreasing, however we still have that $\lambda_n\to\infty$, as $n\to\infty$. 
 By uniqueness of solutions to the eigenvalue problem \eqref{eq:EgvProb}, we may further impose
  $\text{sgn}(w_{2n}(0)) = \text{sgn}(w'_{2n+1}(0)) =(-1)^n$.
Here and in the rest of the paper a prime denotes differentiation with respect to the variable $x$.
 We will also require both subsequences $\{\sqrt{\lambda_{2n} }x\}$ and $\{\sqrt{\lambda_{2n+1}}x\}$ to be equidistributed modulo 1, for every $x\neq 0$. Whether this should  generically be the case is discussed in Problem (3) from \S 1.4 below.
We are now ready to state our second main result.

\begin{theorem}[Sharper asymptotics]\label{mainthm2}
Let $V\in L_{\emph{loc}}^1(\R)$ be an even potential, bounded from below in the sense of \eqref{eq:essinf}, and such that $V(x)\to\infty$, as $|x|\to\infty$. For each $n\in\mathbb{N}$, assume that for the associated eigenvalue problem $H_V(w_n)=\lambda_n w_n$, the following assertions hold:
\begin{itemize}
\item[(H1)] the function $w_n$ is  even  if $n$ is even, and  odd  if $n$ is odd,
\item[(H2)] the sequences $\{\sqrt{\lambda_{2n}}x\}$ and $\{\sqrt{\lambda_{2n+1}}x\}$ are equidistributed modulo 1, for any $x\in \R\setminus \{0\}$,
\item[(H3)] and we have $\sgn(w_{2n}(0)) = \sgn(w'_{2n+1}(0)) =(-1)^n$.
\end{itemize}
Then the  asymptotic
\begin{equation}\label{eq:asymptotic}
w_{n}(x) = (1+o_n(1))\left(w_n(0)^2+\tfrac{w_n'(0)^2}{4\pi^2\la_n}\right)^{1/2}
\cos\left(2\pi\left(\sqrt{\lambda_{n}}x- \tfrac {n}4\right)\right)
\end{equation}
holds uniformly on compact subsets of the real line. 
 If $x,y$ are distinct real numbers such that
 $\frac xy=\frac pq$ for some nonzero coprime integers $p,q$, then the sign correlation limit \eqref{eq:SCL} is given by
\begin{equation}\label{eq:SCLpotentials}
\ell_{\{w_n\}}(x,y) 
 = \left\{
\begin{array}{lc}
 \frac12+\frac{1}{2pq} & \text{if } p \equiv q\equiv 1\mod 4, \text{ or }p \equiv q\equiv  3 \mod 4, \\
\frac12 & \text{otherwise}.
\end{array}
\right.
\end{equation}
If $\frac xy$ is irrational, then $\ell_{\{w_n\}}(x,y) = \frac{1}{2}.$
\end{theorem}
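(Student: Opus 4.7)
The proof unfolds in two stages: first establish the asymptotic \eqref{eq:asymptotic} via a Volterra-type WKB argument, and then use it together with Fourier expansions and equidistribution to evaluate the sign correlation limit. Rewriting the eigenvalue equation as $w_n''(x)=-4\pi^2(\lambda_n-V(x))w_n(x)$ and applying variation of parameters against the free oscillator solutions $\cos(2\pi\sqrt{\lambda_n}x)$ and $\sin(2\pi\sqrt{\lambda_n}x)$ (Wronskian $2\pi\sqrt{\lambda_n}$) produces the integral identity
\begin{equation*}
w_n(x)=w_n(0)\cos(2\pi\sqrt{\lambda_n}x)+\tfrac{w_n'(0)}{2\pi\sqrt{\lambda_n}}\sin(2\pi\sqrt{\lambda_n}x)+\tfrac{2\pi}{\sqrt{\lambda_n}}\int_0^x\sin(2\pi\sqrt{\lambda_n}(x-t))V(t)w_n(t)\,\d t.
\end{equation*}
A Gr\"onwall estimate using $V\in L^1_{\mathrm{loc}}$ gives $\|w_n\|_{L^\infty(I)}\lesssim_I(w_n(0)^2+w_n'(0)^2/(4\pi^2\lambda_n))^{1/2}$ on any compact $I\subset\R$, so the integral remainder is $O_I(\lambda_n^{-1/2})$ relative to this amplitude. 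Combining with (H1) (which forces $w_{2m}'(0)=0$ and $w_{2m+1}(0)=0$), (H3), and the identities $\cos(2\pi(\sqrt{\lambda_n}x-n/4))=(-1)^m\cos(2\pi\sqrt{\lambda_n}x)$ for $n=2m$ and $(-1)^m\sin(2\pi\sqrt{\lambda_n}x)$ for $n=2m+1$, yields \eqref{eq:asymptotic} uniformly on compact subsets.

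From \eqref{eq:asymptotic} one obtains, for large $n$, $\sgn(w_n(x))=\sgn(\cos(2\pi(\sqrt{\lambda_n}x-n/4)))$ wherever the cosine is bounded away from zero; by (H2) the complementary set has arbitrarily small upper density, so this reduction is valid in density. Splitting by parity,
\begin{equation*}
\sgn(w_{2m}(x))=(-1)^m\sgn(\cos(2\pi\sqrt{\lambda_{2m}}x)),\quad \sgn(w_{2m+1}(x))=(-1)^m\sgn(\sin(2\pi\sqrt{\lambda_{2m+1}}x)),
\end{equation*}
and the factor $(-1)^m$ cancels when comparing $x$ and $y$, so $\ell_{\{w_n\}}(x,y)=\tfrac12(|E_+|+|E_-|)$. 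Assuming $x/y=p/q$ with $\gcd(p,q)=1$, set $t_m=\sqrt{\lambda_{2m}}\cdot y/q$, so that $\sqrt{\lambda_{2m}}x=pt_m$ and $\sqrt{\lambda_{2m}}y=qt_m$; by (H2) at $z=y/q$, $\{t_m\}$ is equidistributed modulo $1$, and analogously for odd indices. Thus $|E_\pm|=\tfrac12+\tfrac12 I_\pm$ where
\begin{equation*}
I_+=\int_0^1\sgn(\cos 2\pi pt)\sgn(\cos 2\pi qt)\,\d t,\qquad I_-=\int_0^1\sgn(\sin 2\pi pt)\sgn(\sin 2\pi qt)\,\d t.
\end{equation*}

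Expanding via the Fourier series $\sgn(\sin 2\pi t)=\tfrac{4}{\pi}\sum_{k\geq 0}\tfrac{\sin 2\pi(2k+1)t}{2k+1}$ and $\sgn(\cos 2\pi t)=\tfrac{4}{\pi}\sum_{k\geq 0}\tfrac{(-1)^k\cos 2\pi(2k+1)t}{2k+1}$, orthogonality selects only pairs with $(2k+1)p=(2l+1)q$; coprimality then forces $2k+1=qr$ and $2l+1=pr$ for some odd positive integer $r$, which in particular requires $p,q$ both odd. Summing with $\sum_{r\ \mathrm{odd}}r^{-2}=\pi^2/8$ yields $I_-=1/(pq)$ when both $p,q$ are odd (otherwise $I_-=0$). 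For $I_+$ the additional weight $(-1)^{k+l}$ has exponent $k+l=(p+q)r/2-1$, whose parity is controlled by $p+q\pmod 4$: one obtains $I_+=1/(pq)$ if $p\equiv q\pmod 4$, $I_+=-1/(pq)$ if $p\not\equiv q\pmod 4$, and $I_+=0$ when one of $p,q$ is even. Substituting into $\ell=\tfrac12+\tfrac14(I_++I_-)$ reproduces \eqref{eq:SCLpotentials}. For irrational $x/y$, Weyl's criterion combined with (H2) at $z=ax+by\neq 0$ for every nonzero $(a,b)\in\Z^2$ yields joint equidistribution of $(\sqrt{\lambda_n}x,\sqrt{\lambda_n}y)$ on $[0,1)^2$ along each parity subsequence, whence Fubini immediately gives $\ell=1/2$. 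The main technical hurdle is the uniform WKB step under only $V\in L^1_{\mathrm{loc}}$, where one must propagate the Gr\"onwall bound while keeping track of the joint amplitude $(w_n(0)^2+w_n'(0)^2/(4\pi^2\lambda_n))^{1/2}$ (neither summand being separately controlled); the subsequent sign analysis is a clean Fourier computation in which the unexpected $\pmod 4$ dichotomy emerges from the alternating $(-1)^k$ weights in the cosine expansion interacting with the parity constraints forced by coprimality.
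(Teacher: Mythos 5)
Your proposal is correct and follows essentially the same route as the paper: the same variation-of-parameters identity and Gr\"onwall absorption for the asymptotic \eqref{eq:asymptotic}, followed by the same reduction via equidistribution to periodic integrals of products of sign-of-cosine/sine square waves, whose Fourier expansions under the coprimality constraint $2k+1=qr$, $2l+1=pr$ produce the mod-$4$ dichotomy (your $I_+$ and $I_-$ are exactly the paper's $\int_0^1\Phi(pt,qt)\,\d t$ and $\int_0^1\Phi(pt-\tfrac14,qt-\tfrac14)\,\d t$, obtained there from Lemmas \ref{limit-lemma} and \ref{lem:Fourier} rather than re-derived inline). The only substantive difference is that you are more explicit about two points the paper glosses over — the density argument justifying the passage from the $(1+o_n(1))$-asymptotic to a statement about signs, and the Weyl-criterion deduction of joint equidistribution on $\T^2$ from (H2) in the irrational case — both of which you handle correctly.
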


The asymptotic \eqref{eq:asymptotic} is exactly the one given via WKB approximation. 
 The quadratic case $V(x)=x^2$, where the WKB approximation coincides with the classical asymptotic for Hermite polynomials, falls under the scope of Theorem \ref{mainthm2} and is described in more detail in \S \ref{examples} (together with
 higher dimensional extensions, provided by the Laguerre polynomials).

\subsection{Further remarks and open problems.}

\begin{enumerate}

\item Can Theorems \ref{mainthm} and \ref{mainthm2} be extended to sign correlations of three or more points? 
What can be said about the density with which a specific sign configuration, say $(+, -, +,-,-)$, can occur? 
Some of these may be universally bounded away from 0 and 1, while others may not be.
In principle, our approach provides a framework for obtaining such bounds since each such question is reduced to a finite  computation. However, the increase in complexity is substantial, which is why we have not been able to further explore this question. We believe it to be a promising avenue for future research.\\
\item Is it possible to characterize the class of potentials $V$ such that our result applies to eigenfunctions of the Schr\"odinger operator $H_V$? The WKB approximation seems to be a valuable tool; however, it is not clear to us whether a suitable theory on the equidistribution of the eigenvalues of differential operators exists.
On the other hand, the asymptotic growth of $\{\lambda_n\}$, as $n\to\infty$,
 has been studied extensively, see e.g. \cite{fefferman}, 
 but this question seems more subtle.\\

\item  
As we shall see, these questions are connected to classical problems on the asymptotic behavior of geodesics on the $d$-dimensional torus $\T^d$.
It is natural to expect that several of the new developments regarding strong forms of linear flow rigidity \cite{beck0, beckh, beck, beck2, grep, stein} can be used to make more precise statements in some special cases.
We also note that at least for some classical families of orthogonal polynomials it should be possible to
obtain more precise quantitative information -- see \S 4 below for further details.
\end{enumerate}
\smallskip

\section{Useful Lemmata}

We start with a general result that will serve as a first step towards computing the sign correlation limit of a sequence of functions over a fixed finite set of points $\ba=(a_1,...,a_d)\in \R^d$. 

\begin{lemma}\label{limit-lemma}
Given 
$\ba\in \R^d$, assume that $\lambda \ba\in \Z^d$, for some $\lambda>0$. Let $f:\R\to\R$ be a continuous $1$-periodic 
function. Let $\{\mu_n\}\subset\R$ be a sequence such that $\{\frac{\mu_n}{\lambda}\}$ is equidistributed modulo $1$. 
Let $\bs\in\{-1,1\}^d$.
Then 
\begin{align*}
\lim_{N\to\infty} \frac 1N\#\{0\leq n< N: (\sgn[f(\mu_n a_1)],...,\sgn[f(\mu_n a_d)])=\pm \bs\} = \int_0^1 \Psi(\lambda t \ba)\,\dt,
\end{align*}
where the function $\Psi:\R^d\to\{0,1\}$ is defined as follows: given $\bu\in\R^d$, then $\Psi(\bu)=1$ 
if $(\sgn[f(u_1)],...,\sgn[f(u_d)])=\pm\bs$, and $\Psi(\bu)=0$ otherwise.
\end{lemma}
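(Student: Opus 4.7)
The plan is to rewrite the left-hand side as a Cesàro average of a bounded $1$-periodic function evaluated along an equidistributed sequence, at which point Weyl's equidistribution theorem (in its Riemann-integrable version) yields the integral on the right.

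\emph{Step 1 (periodicity reduction).} For each $n$, write $\mu_n/\lambda = m_n + t_n$ with $m_n = \lfloor \mu_n/\lambda\rfloor \in \Z$ and $t_n = \{\mu_n/\lambda\}\in[0,1)$. Then for every $i$ one has $\mu_n a_i = (\lambda a_i) m_n + \lambda t_n a_i$, and since $\lambda a_i \in \Z$ by hypothesis, $(\lambda a_i) m_n \in \Z$. The $1$-periodicity of $f$ therefore gives $f(\mu_n a_i) = f(\lambda t_n a_i)$ for all $i$, so
\[
\mathbf{1}_{\{(\sgn f(\mu_n a_1),\dots,\sgn f(\mu_n a_d))=\pm\bs\}} = \Psi(\lambda t_n \ba).
\]
Consequently, the frequency on the left-hand side of the lemma equals $\lim_{N\to\infty} \frac{1}{N}\sum_{n=0}^{N-1} g(t_n)$, where $g(t) := \Psi(\lambda t \ba)$ is a bounded $1$-periodic function of $t\in\R$ (since $\lambda a_i\in\Z$).

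\emph{Step 2 (Weyl equidistribution).} By assumption, $\{\mu_n/\lambda\}$ is equidistributed modulo $1$, so $\{t_n\}$ is equidistributed in $[0,1]$. Weyl's theorem, in its standard extension to Riemann-integrable $1$-periodic functions, then gives
\[
\lim_{N\to\infty} \frac{1}{N}\sum_{n=0}^{N-1} g(t_n) = \int_0^1 g(t)\,\dt = \int_0^1 \Psi(\lambda t \ba)\,\dt,
\]
which is the desired conclusion.

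\emph{Step 3 (Riemann integrability).} The only point requiring care is the verification that $g$ is Riemann integrable on $[0,1]$; this is the main (and essentially the only) obstacle. Since $g$ is $\{0,1\}$-valued, this is equivalent to its set of discontinuities having Lebesgue measure zero, and that set is contained in $\bigcup_{i=1}^d \{t\in[0,1] : f(\lambda a_i t)=0\}$. For the applications in Theorem \ref{mainthm} and Theorem \ref{mainthm2}, $f$ is (essentially) a cosine whose zero set on any bounded interval is finite, so $g$ is discontinuous at only finitely many points. In full generality, one sandwiches $g$ between continuous $1$-periodic functions $g_\pm$ with $\int_0^1 (g_+ - g_-)\,\dt < \varepsilon$, applies Weyl's theorem to $g_\pm$, and lets $\varepsilon \to 0$; this is the standard bootstrap from continuous test functions to indicators of Jordan-measurable sets.
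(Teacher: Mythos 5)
Your proof is correct and follows essentially the same route as the paper's: reduce to the $1$-periodic function $g(t)=\Psi(\lambda t\ba)$ via $\lambda\ba\in\Z^d$, and apply equidistribution of $\{\mu_n/\lambda\}$ modulo $1$. Your Step 3 is in fact more careful than the paper, which silently assumes the Riemann integrability of $g$ (equivalently, Jordan measurability of $\{g=1\}$) needed to pass from equidistribution to convergence of the averages; this holds in the applications since $f$ is a cosine, though for an arbitrary continuous $f$ whose zero set has positive measure the sandwiching in your last sentence would not be available.
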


\begin{proof}
Consider the function $g(t):=\Psi(\lambda t \ba)$, which 
satisfies $g(t+1)=g(t)$, for every $t\in\R$. 
By construction, we have that
$$\left\{0\leq n< N: (\sgn[f(\mu_n a_1)],...,\sgn[f(\mu_n a_d)])=\pm \bs\right\} = \{0\leq n< N: g(\tfrac{\mu_n}{\lambda})=1\}.$$ 
Since the function $g$ is 1-periodic and the sequence $\{\frac{\mu_n}{\lambda}\}$ is equidistributed modulo 1, we have that, as $N\to\infty$,
$$\frac1N \#\{0\leq n< N: (\sgn[f(\mu_n a_1)],...,\sgn[f(\mu_n a_d)])=\pm \bs\}  \to |\{t\in [0,1]: g(t)=1\}| = \int_{[0,1]} g.$$
The last identity follows from the fact that the function $g$ takes values in $\{0,1\}$.
This concludes the proof of the lemma.
\end{proof}

Only the case $d=2$ of Lemma \ref{limit-lemma} will be relevant to our applications.
For the remainder of the section, we will discuss integrals of the function
\begin{equation}\label{eq:DefPhi}
\Phi(x,y):=\text{sgn}(\cos(2\pi x)\cos(2\pi y))
\end{equation}
over rays of the two-dimensional torus $\mathbb{T}^2=\R^2/\Z^2$,
which will play a key role in the proof of our main theorems.
We remark that the Haar measure on $\mathbb{T}^2$ coincides with the Lebesgue measure on the fundamental domain $[0,1]^2$.
We further note that, given a ray $\gamma:\R\to\mathbb{T}^2$ defined by $\gamma(t)=(At-\alpha,Bt-\beta)$ for some $A,B\neq 0$, then
\begin{equation}\label{eq:cov}
 \lim_{T \rightarrow \infty}{ \frac{1}{T} \int_{0}^{T}{ \Phi(\gamma(t))  \,\d t}} =  \lim_{T \rightarrow \infty}{ \frac{1}{T} \int_{0}^{T}{ \Phi(\widetilde \gamma(t))  \,\d t}},
\end{equation}
where $\widetilde \gamma:\R\to\mathbb{T}^2$ is in turn given by $\widetilde \gamma(t)=(t,at+b)$, with $a=B/A$ and $b=(B/A) \alpha-\beta$.
 The following lemma is well-known, with suitable modifications and vast generalizations appearing in 
 \cite{dick, drmota, kuipers}.
 For the sake of completeness, we provide a short proof.
\begin{lemma}\label{lem:Ergodic}
Given $a,b\in\R$, let $\gamma(t) = (t, at + b)$ be the corresponding line in $\mathbb{R}^2$. Then the limit
\begin{equation}\label{eq:TimeSpaceAvg}
 \lim_{T \rightarrow \infty}{ \frac{1}{T} \int_{0}^{T}{ \Phi(\gamma(t))  \,\d t}}
 \end{equation}
exists. Moreover, if the limit is nonzero, then the coefficient $a$ is a rational number.
\end{lemma}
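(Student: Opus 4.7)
My plan is to split into two cases based on whether $a$ is rational or irrational, since these call for rather different techniques; in both cases the existence of the limit will be manifest from the argument itself.

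If $a=p/q$ is rational with $\gcd(|p|,q)=1$ and $q>0$, I would first note that $\gamma(t+q)=(t+q,\,at+p+b)\equiv(t,\,at+b)=\gamma(t)\pmod{\Z^2}$, so that the bounded measurable function $t\mapsto \Phi(\gamma(t))$ is $q$-periodic on $\R$. The standard fact that the Cesàro average of a bounded periodic function converges to its mean over one period then yields
\[
\lim_{T\to\infty}\frac{1}{T}\int_0^T \Phi(\gamma(t))\,\d t=\frac{1}{q}\int_0^q \Phi(\gamma(t))\,\d t,
\]
establishing existence of the limit (and allowing it to be nonzero).

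If $a$ is irrational, I would invoke Weyl's equidistribution theorem, which asserts that the orbit $t\mapsto (t,at+b)\bmod 1$ of the linear flow on $\mathbb{T}^2$ equidistributes with respect to Haar measure. The function $\Phi$ is bounded and its discontinuity set consists of the four circles $\{\cos(2\pi x)\cos(2\pi y)=0\}$, which have Lebesgue measure zero on $\mathbb{T}^2$, so $\Phi$ is Riemann integrable. Upgrading Weyl's theorem from continuous test functions to $\Phi$ by a standard sandwich argument (approximating $\Phi$ from above and below by continuous functions that agree with it outside an $\varepsilon$-neighborhood of its singular set, and letting $\varepsilon\to 0$), one obtains
\[
\lim_{T\to\infty}\frac{1}{T}\int_0^T \Phi(\gamma(t))\,\d t=\int_{[0,1]^2}\Phi(x,y)\,\d x\,\d y=\left(\int_0^1 \sgn(\cos(2\pi x))\,\d x\right)^{\!2}=0,
\]
by Fubini's theorem, because the square wave $\sgn(\cos(2\pi x))$ has zero mean over a period. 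Contraposition then yields the second assertion.

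The only mildly delicate point, which I expect to be the main technical step, is the passage from continuous functions to the piecewise-constant $\Phi$ in the irrational case: Weyl's theorem is usually stated for continuous test functions, so the upgrade to the Riemann integrable $\Phi$ needs a brief justification via the sandwich argument sketched above. Everything else reduces to elementary periodicity, to Fubini, or to the classical equidistribution of linear flows on the two-torus.
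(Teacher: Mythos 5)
Your proof is correct and follows essentially the same route as the paper: periodicity of $t\mapsto\Phi(\gamma(t))$ in the rational case, and equidistribution of the linear flow on $\mathbb{T}^2$ together with $\int_{\mathbb{T}^2}\Phi=0$ in the irrational case. The only difference is that you spell out the details the paper delegates to a citation (the sandwich argument for the Riemann-integrable test function $\Phi$ and the Fubini computation of its mean), which is a welcome but not substantively different elaboration.
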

\begin{proof} Since the function $\Phi$ is 1-periodic in the variables $x$ and $y$, the problem reduces to a standard question in equidistribution theory on the 2-dimensional torus $\T^2$. 
If  $a$ is irrational, then the line $t\mapsto (t,at+b)$ is densely wound and equidistributes over $\T^2$, and the averaged integral in \eqref{eq:TimeSpaceAvg} converges to the average value of $\Phi$,
$$ \lim_{T \rightarrow \infty}{ \frac{1}{T} \int_{0}^{T}{ \Phi(\gamma(t)) \,\d t}} =  \int_{\mathbb{T}^2}{\Phi} = 0,$$
see \cite[\S 2.3]{drmota}.
If $a$ is rational, then the line $t\mapsto (t,at+b)$ gives rise to a closed geodesic on $\T^2$, and the existence of the limit \eqref{eq:TimeSpaceAvg} follows from periodicity.
\end{proof}
The next  lemma further analyzes the case of rational slope $a= p/q\in\Q$. 
It is  of quantitative flavor, and relies on the explicit form of the function $\Phi$. 
We achieve this by resorting to Fourier series, and will
 normalize the Fourier coefficients of an integrable function $f:[0,1]\to\C$ in the following way: 
\begin{equation}\label{eq:FourierNormalization}
\widehat{f}(n)=\int_0^1 f(x) e^{-2\pi i n x}\,\d x.
\end{equation}

\begin{lemma}\label{lem:Fourier}
Let $A,B\in \R$ be nonzero real numbers, such that $A/B = p/q$ for some coprime $p,q\in\Z$. Let $\alpha,\beta\in \R$ and let $\gamma(t) = (At-\alpha, Bt-\beta)$ be the corresponding ray on $\mathbb{T}^2$. 
If either $p$ or $q$ are even, then
$$
 \lim_{T \rightarrow \infty}  \frac{1}{T} \int_{0}^{T} \Phi(\gamma(t)) \,\d t  = 0.
$$
If both $p$ and $q$ are odd, then
$$
 \lim_{T \rightarrow \infty}  \frac{1}{T} \int_{0}^{T} \Phi(\gamma(t)) \,\d t = (-1)^{\frac{p+q}2+1}\frac{8}{\pi^2 pq} \sum_{\ell=0}^\infty \frac{\cos\left(2\pi (2\ell+1) (p\beta-q\alpha)\right)}{(2\ell+1)^2}.
$$

In particular, in this case, we have that
\begin{equation*}
\left| \lim_{T \rightarrow \infty}  \frac{1}{T} \int_{0}^{T}{ \Phi(\gamma(t))\, \d t} \right| \leq  \frac 1{|pq|}, \,\,\,(p,q \text{ odd})
\end{equation*}
where equality is attained if and only if $p\beta-q\alpha$ is an integer.
\end{lemma}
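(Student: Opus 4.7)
The plan is a direct Fourier-series computation. Since $A/B = p/q$ is rational, $\gamma$ descends to a closed geodesic on $\T^2$ of period $T_0 = p/A = q/B$, so the Ces\`aro time average in the statement coincides with the mean of the $T_0$-periodic function $\Phi(\gamma(\cdot))$ over one period. I would then factor $\Phi(x,y) = \sigma(x)\sigma(y)$ with $\sigma(u) := \sgn(\cos(2\pi u))$ the $1$-periodic square wave; a direct calculation gives $\widehat\sigma(n) = \tfrac{2\sin(\pi n/2)}{\pi n}$ for $n\neq 0$ (and $\widehat\sigma(0)=0$), so $\widehat\sigma$ is supported on odd integers, with $\widehat\sigma(2\ell+1) = \tfrac{2(-1)^\ell}{\pi(2\ell+1)}$ for $\ell\in\Z$.

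Next, I would formally expand
\begin{equation*}
\sigma(At-\alpha)\sigma(Bt-\beta) = \sum_{k,m\in\Z}\widehat\sigma(k)\widehat\sigma(m)\,e^{-2\pi i(k\alpha+m\beta)}\,e^{2\pi i(kA+mB)t}
\end{equation*}
and time-average, isolating the resonant modes $kA+mB = 0$; coprimality of $p$ and $q$ forces $(k,m)=(jq,-jp)$ for $j\in\Z$. The interchange of sum and integral can be legitimized either by Fej\'er truncation, or by directly expanding the $T_0$-periodic function $\Phi(\gamma(\cdot))$ on $[0,T_0]$ and reading off its zeroth Fourier coefficient. If either $p$ or $q$ is even, one of the factors $\widehat\sigma(jq)$, $\widehat\sigma(-jp)$ vanishes for every $j$, so the sum is $0$. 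If both $p,q$ are odd, only odd indices $j=2s+1$ contribute, and the identity $\sin(\pi(2s+1)q/2) = (-1)^s(-1)^{(q-1)/2}$ (valid for odd $q$) combines to give
\begin{equation*}
\widehat\sigma((2s+1)q)\,\widehat\sigma(-(2s+1)p) = \frac{4(-1)^{(p+q)/2-1}}{\pi^2(2s+1)^2\, pq}.
\end{equation*}
Pairing $s\leftrightarrow -s-1$ merges the two conjugate exponentials into $2\cos\!\big(2\pi(2s+1)(p\beta-q\alpha)\big)$, producing the stated formula after noting $(-1)^{(p+q)/2-1}=(-1)^{(p+q)/2+1}$.

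For the final bound, the triangle inequality together with the classical identity $\sum_{\ell\geq 0}(2\ell+1)^{-2}=\pi^2/8$ yields $|\cdot|\leq 1/|pq|$; saturation forces every cosine to equal $\pm 1$ with matching signs, which occurs when $p\beta-q\alpha\in\tfrac12\Z$, and in the integer case highlighted in the statement every cosine equals $+1$. The main delicate point I expect is the parity/sign bookkeeping that collapses the separate signs $(-1)^{(p-1)/2}$ and $(-1)^{(q-1)/2}$ into the clean exponent $(p+q)/2-1$, together with the justification of term-by-term integration of a Fourier series that is not absolutely convergent; everything else is a routine application of standard identities for the square wave.
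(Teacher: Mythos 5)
Your proposal is correct and follows essentially the same route as the paper: reduce to one period of the closed geodesic, expand $\Phi$ in Fourier series (the paper writes the two--dimensional series directly, which is just the product of your square--wave coefficients), keep only the resonant modes $mp+nq=0$ --- which vanish unless both $p$ and $q$ are odd --- and conclude with the triangle inequality via $\sum_{\ell\ge 0}(2\ell+1)^{-2}=\pi^2/8$. One small point in your favour: your observation that saturation occurs exactly when $p\beta-q\alpha\in\tfrac12\Z$ (all cosines equal to $-1$ in the half--integer case) is sharper than the paper's stated ``if and only if $p\beta-q\alpha\in\Z$'', which overlooks the half--integer case.
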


\begin{proof}
By periodicity, recall \eqref{eq:cov}, we have that
\begin{equation*}
\lim_{T \rightarrow \infty}  \frac{1}{T} \int_{0}^{T}{ \Phi(\gamma(t))\, \d t}
=\int_0^1\Phi(pt-\alpha,qt-\beta)\,\d t.
\end{equation*}
Expanding the function $\Phi$ in Fourier series,
$$\Phi(x,y) = \frac{4}{\pi^2}\sum_{\substack{n,m\in\Z \\ m,n\neq 0}} \frac{\sin(\frac{\pi n}2)\sin(\frac{\pi m}2)}{mn} e^{2\pi i(mx+ny)},$$
we  obtain that
\begin{align*}
\int_0^1 \Phi(pt-\alpha,qt-\beta)\,\d t & =  \frac{4}{\pi^2}\sum_{\substack{n,m\in\Z \\ m,n\neq 0}} 
\frac{\sin(\frac{\pi n}2)\sin(\frac{\pi m}2)}{mn} \int_0^1 e^{2\pi i(mp+nq)t} e^{-2\pi i(m\alpha+n\beta)}\, \d t \\ 
& = \frac{8}{\pi^2pq} \sum_{k=1}^\infty \frac{\sin(\frac{\pi kp}2)\sin(\frac{\pi kq}2)}{k^2} \cos(2\pi k(p\beta-q\alpha)).
\end{align*}
This quantity vanishes if either $p$ or $q$ are even. 
On the other hand, if both $p$ and $q$ are odd, then 
$$
\int_0^1 \Phi(pt-\alpha,qt-\beta)\,\d t 
= (-1)^{\frac{p+q}2+1}\frac{8}{\pi^2 pq} \sum_{\ell= 0}^\infty \frac{\cos(2\pi (2\ell+1) (p\beta-q\alpha))}{(2\ell+1)^2}.
$$
Since
$\sum_{\ell= 0}^{\infty} \frac{1}{(2\ell+1)^2} = \frac{\pi^2}{8},$
the triangle inequality implies
$$
\left|\int_0^1 \Phi(pt-\alpha,qt-\beta)\,\d t\right| \leq \frac{1}{|pq|},
$$
where equality is attained if and only if $(p\beta-q\alpha)\in \Z$.
\end{proof}

\section{Proofs of Theorems \ref{mainthm} and \ref{mainthm2}}

\begin{proof}[Proof of Theorem \ref{mainthm}] 
Let $x\neq y \in D$ be given, satisfying $\varphi(x)\neq\pm\varphi (y)$ and  $\phi(x,n)\phi(y,n)>0$, for all $n$. 
No generality is lost in assuming that $\varphi(x)/\varphi(y)=p/q$ for some coprime $p,q\in\Z$, 
and that $p/\varphi(x)= q/\varphi(y)>0$, for otherwise Lemma \ref{lem:Ergodic} would imply that $\ell_{\{w_n\}}(x,y)=\frac12$, and there is nothing to prove.
Now, since $\phi(x,n)\phi(y,n)>0$ for all $n$, the asymptotic \eqref{eq:H1asympt} implies 
\begin{equation}\label{eq:equallim}
\ell_{\{w_n\}}(x,y)=\ell_{\{u_n\}}(x,y),
\end{equation}
where $u_n(x):=\cos(2\pi(\mu_n\varphi(x)-\theta))$.
We focus on the latter limit, and prepare to apply
  Lemma \ref{limit-lemma} with $\ba=(\varphi(x),\varphi(y))$, $\bs=(1,1)$, $\lambda=p/\varphi(x)=q/\varphi(y)$, and $f(z)=\cos(2\pi(z-\theta))$.
  Note that the our equidistribution assumption implies that the sequence $\{\tfrac{\mu_n}{\lambda}\}=\{p^{-1}\mu_n\varphi(x)\}$ is equidistributed modulo 1, and so all the hypotheses of Lemma \ref{limit-lemma} are satisfied.
The conclusion is that  
  \begin{equation}\label{eq:secondlim}
  \ell_{\{u_n\}}(x,y)=\int_0^1 \Psi(pt-\theta,qt-\theta)\,\d t,
  \end{equation}
  where the function $\Psi$ is related to $\Phi$ from \eqref{eq:DefPhi} via $\Phi=2\Psi-1$.
It then follows from \eqref{eq:equallim} and \eqref{eq:secondlim} that
  \begin{equation}\label{eq:TL}
  \ell_{\{w_n\}}(x,y)=\frac12+\frac12\int_0^1 \Phi(pt-\theta, qt-\theta) \,\d t.
  \end{equation}
The latter integral was computed in the course of the proof of Lemma \ref{lem:Fourier}, and is non-zero only if both $p,q$ are odd. In that case, applying Lemma \ref{lem:Fourier} with $A=\varphi(x)$, $B=\varphi(y)$, and $\alpha=\beta=\theta$, yields
  \begin{equation}\label{eq:fourthlim}
\left|\int_0^1 \Phi(pt-\theta, qt-\theta)\, \d t\right|\leq \frac1{|pq|}.
  \end{equation}
To finish the argument, note that $p,q$ both being odd, and $\varphi(x)\neq\pm\varphi(y)$, jointly force the inequality $\frac1{|pq|}\leq \frac13$. Estimates \eqref{eq:TL} and \eqref{eq:fourthlim} then imply \eqref{eq:upperlower}, which is the first desired conclusion.
To verify the claimed optimality, 
recall the cases of equality in Lemma \ref{lem:Fourier} and consider the particular case when $\varphi(x)=3\varphi(y)$ and $\theta(\varphi(x)-\varphi(y))\in\Z$.
\end{proof}

\begin{proof}[Proof of Theorem \ref{mainthm2}] 
Let us briefly recall the proof of the asymptotic \eqref{eq:asymptotic}.
Start by noting that two linearly independent solutions of the associated homogeneous equation
$w_n''+4\pi^2\lambda_n w_n=0$
are given by
\begin{equation*}
w_{n,1}(x):=\cos(2\pi\sqrt{\lambda_n}x),\text{ and }w_{n,2}(x):=\sin(2\pi\sqrt{\lambda_n}x),
\end{equation*}
and have constant Wronskian 
\begin{displaymath}
W(w_n^{(1)},w_n^{(2)}):=
\det\left(
\begin{array}{cc}
w_{n,1} & w_{n,2}\\
w'_{n,1} & w'_{n,2}
\end{array}\right)
=2\pi\sqrt{\lambda_n}.
\end{displaymath}
The general solution to the eigenvalue problem \eqref{eq:EgvProb} is then given by
\begin{equation}\label{PreGenSolNHODE2}
a \cos(2\pi \sqrt{\lambda_n}x)+ b \sin(2\pi \sqrt{\lambda_n}x)+ \frac{2\pi}{\sqrt{\lambda_n}}\int_{0}^{x} \sin(2\pi\sqrt{\lambda_n}(x-t))V(t) w_n(t)\,\d t,
\end{equation}
for some $a,b\in\R$, as can be easily checked by direct differentiation.
Evaluating \eqref{PreGenSolNHODE2} and its derivative at zero while appealing to hypotheses (H1) and (H3), we then have that
\begin{equation}\label{GenSolNHODE2}
w_n(x)=\sqrt{w_n(0)^2+\tfrac{w_n'(0)^2}{4\pi^2\la_n}} \, 
\cos\left(2\pi\left(\sqrt{\lambda_{n}}x-\tfrac{n}4\right)\right)
+\frac{2\pi}{\sqrt{\lambda_n}}\int_{0}^{x} \sin(2\pi\sqrt{\lambda_n}(x-t))V(t) w_n(t)\,\d t.
\end{equation}
Define $M_n(x):=\max\{|w_n(y)|: y\in[0,x]\}$. 
Applying the integral form of Gr\"onwall's inequality \cite[Theorem 1.10]{gron1} to \eqref{GenSolNHODE2}, we deduce 
$$M_n(x) \leq \left(w_n(0)^2+\tfrac{w_n'(0)^2}{4\pi^2\la_n}\right)^{1/2} + o_n(1)M_n(x),$$
and therefore  
$$M_n(x) \lesssim \left(w_n(0)^2+\tfrac{w_n'(0)^2}{4\pi^2\la_n}\right)^{1/2},$$ 
from where asymptotic \eqref{eq:asymptotic} follows at once.

The rest of the proof follows similar steps to those of Theorem \ref{mainthm}.
Firstly, we can restrict attention to the case of rational $x/y$. Secondly,
\begin{equation*}
\ell_{\{w_n\}}(x,y)=\ell_{\{v_n\}}(x,y),
\end{equation*}
where $v_n(x):=\cos(2\pi(\sqrt{\lambda_n} x-\frac n4))$.
Thirdly, given the equidistribution assumption (H2), Lemma \ref{limit-lemma} again applies and reduces the computation to 
$$\ell_{\{v_n\}}(x,y)=\int_0^1 \left(\frac{\Psi_0+\Psi_1}2\right)(pt, qt)\, \d t.$$
Here, the functions $\Psi_0,\Psi_1$ are given by $\Phi_0=:2\Psi_0-1$ and $\Phi_1=:2\Psi_0-1$, where $\Phi_0:=\Phi$ was given in \eqref{eq:DefPhi}, and $\Phi_1(x,y):=\Phi(x-\frac14,y-\frac14)$.
Consequently,
$$\ell_{\{w_n\}}(x,y)=\frac12+\frac14\int_0^1\Phi(pt, qt)\,\d t+\frac14\int_0^1\Phi(pt-\tfrac14, qt-\tfrac14)\,\d t.$$
These integrals can be calculated with Lemma \ref{lem:Fourier}. Invoking it with $\alpha=\beta=0$, and then with $\alpha=\beta=\frac14$, yields
$$\ell_{\{w_n\}}(x,y)=\frac12+\frac1{4{pq}}\left({(-1)^{\frac{p+q}2+1}}+(-1)^{p+1}\right)$$
This is readily seen to be equivalent to the result as stated in \eqref{eq:SCLpotentials}.
\end{proof}

\section{Further examples: Hermite functions, Laguerre polynomials and sets of bounded remainder}\label{examples}

\subsection{Hermite functions}
One could think of replacing hypothesis \eqref{eq:H1asympt} from Theorem \ref{mainthm} by a less restrictive assumption of the form
$$w_n(x) = \left(1 + o_n(1) \right) \phi(x,n) \cos{\left( 2\pi (\mu_n \varphi(x) - \theta_n) \right)},$$
where $\{\theta_n\}$ is now a {\it sequence}.
Without any further assumption, several steps of the preceding proofs break down completely. 
However, if some quantitative control on the speed with which the sequences $\{\mu_n \varphi(x)\}$ and 
$\{\mu_n \varphi(y)\}$ equidistribute modulo $1$ is known, then we can allow for a certain degree of variability in the sequence $\{\theta_n\}$.
It is not clear to us what the sharp version of such a statement would be, and we leave it for future research.\\

Cases in which the sequence $\{\theta_n\}$ changes rapidly with $n$, but does so in a structured manner, are also of interest. 
Such cases may be dealt with by partitioning $\{w_n\}$ into an appropriate number of subsequences, as we now illustrate.
A particularly nice example which fits into this framework (and served as original inspiration for Theorem \ref{mainthm2}) is that of the Schr\"odinger operator on the real line,
$$H:=-\frac{1}{4\pi^2}\frac{\d^2}{\d x^2}+x^2.$$
The operator $H$ is diagonalized by the Hermite functions,
$$
\varphi_n(x) := H_n(\sqrt{2\pi}x)e^{-\pi x^2}.
$$
Here, $\{H_n(x)\}$ denote the classical Hermite polynomials, which are orthogonal with respect to the standard Gaussian measure $e^{-\pi x^2}\, \d x$. As is well-known,
\begin{equation}\label{eq:EigenValueVarPhi}
H (\varphi_n) = \frac{2n+1}{2\pi}\varphi_n.
\end{equation}
Moreover, the  asymptotic from \cite[Theorem 8.22.6 and Formula (8.22.8)]{Sz75},
$$ 
 H_n(\sqrt{2\pi}x) e^{-{\pi x^2}}
= (1+o_n(1))\frac{\Gamma(n+1)}{\Gamma(\frac n2+1)}\cos{  \left(2\pi \left(\sqrt{\tfrac{2n+1}{2\pi}}x - \frac{n}{4}\right) \right)}
$$
shows that the eigenfunctions $\{\varphi_n\}$ in \eqref{eq:EigenValueVarPhi} do {not} satisfy the assumptions of Theorem 1,
 but that the subsequences $\{\varphi_{2n}\}$ and $\{\varphi_{2n+1}\}$ do. We further note that the basis $\{\varphi_n\}$  diagonalizes the Fourier transform in the following sense: the elements of $\{\varphi_n\}$  are pairwise orthogonal, dense in $L^2(\R)$, and 
$$
\widehat \varphi_n(\xi) = \int_{-\infty}^\infty \varphi_n(x)e^{-2\pi i \xi x}\,\d x = (-i)^n \varphi_n(\xi).
$$ 

A simple consequence of Theorem \ref{mainthm2} (plus a short computation) is the following.

\begin{proposition}[Sign Correlations for Hermite functions] Let $x,y \neq 0$ and $y/x \in \mathbb{Z}$. Then
 $$  \ell_{\{\varphi_n\}}(x,y) = \begin{cases}
\frac{1}{2} + \frac{x}{2y}~ &\mbox{if}~\frac{y}{x} \equiv 1 \mod 4, \\
 \frac{1}{2} \qquad &\mbox{otherwise.}
 \end{cases}$$
 \end{proposition}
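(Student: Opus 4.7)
The plan is to read off the proposition as a direct corollary of Theorem~\ref{mainthm2} applied to the even, locally integrable potential $V(x)=x^2$, followed by a short arithmetic computation. Write $m:=y/x\in\Z\setminus\{0\}$, so that $x/y=1/m$ is already in lowest terms, and take $p=1$, $q=m$ in \eqref{eq:SCLpotentials}. Since $p=1$ is always odd and congruent to $1\bmod 4$, the first branch of \eqref{eq:SCLpotentials} is triggered precisely when $m$ is odd with $m\equiv 1\bmod 4$; in that case one obtains $\frac{1}{2pq}=\frac{1}{2m}=\frac{x}{2y}$, matching the proposition. In all remaining cases ($m$ even, or $m\equiv 3\bmod 4$), the second branch of \eqref{eq:SCLpotentials} returns $\tfrac{1}{2}$.

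It remains to verify hypotheses (H1)--(H3) of Theorem~\ref{mainthm2} for the Hermite eigenbasis $\{\varphi_n\}$, whose eigenvalues are $\lambda_n=(2n+1)/(2\pi)$. Hypothesis (H1) is immediate, since the Hermite polynomial $H_n$ has parity matching $n$. For (H3), the explicit values $H_{2n}(0)=(-1)^n(2n)!/n!$, together with the differentiation identity $H_{2n+1}'(0)=2(2n+1)H_{2n}(0)$ and the chain-rule computation $\varphi_n'(0)=\sqrt{2\pi}\,H_n'(0)$, jointly yield $\sgn(\varphi_{2n}(0))=\sgn(\varphi_{2n+1}'(0))=(-1)^n$.

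The main obstacle is hypothesis (H2): for every $x\neq 0$, both subsequences $\sqrt{\lambda_{2n}}\,x=\tfrac{x}{\sqrt{2\pi}}\sqrt{4n+1}$ and $\sqrt{\lambda_{2n+1}}\,x=\tfrac{x}{\sqrt{2\pi}}\sqrt{4n+3}$ must be equidistributed modulo~$1$. Each takes the form $f(t)=c\sqrt{4t+r}$ with $c\neq 0$ and $r\in\{1,3\}$, and satisfies $f'(t)=2c/\sqrt{4t+r}\to 0$ monotonically while $tf'(t)\sim c\sqrt{t}\to\pm\infty$. A classical Fej\'er-type equidistribution criterion (ultimately a van der Corput estimate applied to the Weyl sums $\sum_{n<N}e^{2\pi i k f(n)}$) then yields the desired equidistribution modulo~$1$. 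With (H1)--(H3) in place, Theorem~\ref{mainthm2} applies directly, and the arithmetic computation from the first paragraph finishes the proof.
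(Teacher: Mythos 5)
Your proposal is correct and follows exactly the route the paper intends: the paper presents this proposition as ``a simple consequence of Theorem \ref{mainthm2} (plus a short computation)'' for the quadratic potential $V(x)=x^2$, and your computation with $p=1$, $q=y/x$ in \eqref{eq:SCLpotentials} (noting that $p\equiv q\equiv 3 \bmod 4$ is then impossible) is precisely that short computation. Your explicit verification of (H1)--(H3) --- in particular the Fej\'er/van der Corput argument for the equidistribution of $\{c\sqrt{4n+r}\}$ in (H2) and the sign computation from $H_{2n}(0)$ and $H_{2n+1}'(0)=2(2n+1)H_{2n}(0)$ for (H3) --- supplies details the paper leaves implicit, but does not change the approach.
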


\subsection{Laguerre polynomials} 
An extension of the previous example to higher dimensions involves the so-called {\it Laguerre polynomials}. Let $\{L^\nu_n(x)\}$ be the (generalized) Laguerre polynomials with parameter $\nu>-1$, defined via
\begin{equation}\label{Laguerre_orthogonality}
\int_{0}^\infty L_n^\nu(x)L_m^\nu(x)x^{\nu}e^{-x}\, \d x = \frac{\Gamma(n+\nu+1)}{n!}\delta(n-m).
\end{equation}
 It is well-known, see \cite[Formula (8.22.2)]{Sz75}, that
$$
L_n^\nu(2\pi x^2) e^{-\pi x^2}=(1+o_n(1))\frac{{n}^{\nu/2-1/4}}{\sqrt{\pi}(2\pi)^{\nu/2+1/4}x^{\nu+1/2}} \cos\left(2\pi\left(\sqrt{\tfrac{4n+2\nu+2}{2\pi}}x - \frac{2\nu+1}{8}\right)\right).
$$
It is also known that the set of Laguerre functions 
$$
{\bf x}\in \R^d \mapsto \Phi_n({\bf x}):=L^{\nu}_n(2\pi|{\bf x}|^2) e^{-\pi|{\bf x}|^2},
$$
with $\nu=d/2-1$,  diagonalizes the operator ${\bf H}=-\frac{1}{4\pi^2}\Delta + |{\bf x}|^2$ over the space of radial functions in $\R^d$, and that
$$
{\bf H}(\Phi_n) = \frac{(4n+2\nu+2)}{2\pi} \Phi_n.
$$
We also note that $\{\Phi_n\}$  diagonalizes the Fourier transform over the space of square integrable radial functions in $\R^d$. Indeed,
$$
\widehat \Phi_n(\boldsymbol{\xi}) = \int_{\R^d} \Phi_n({\bf x})e^{-2\pi i \boldsymbol{\xi} \cdot {\bf x}}\,\d {\bf x} =  (-1)^n \Phi_n(\boldsymbol{\xi}).
$$
The following result is a direct application of Lemma \ref{lem:Fourier} with $\alpha=\beta=\frac{2\nu+1}{8} = \frac{d-1}{8}$.
\begin{proposition}[Sign Correlations for Laguerre functions]
Let $r_1,r_2>0$ be radii such that $\frac{r_1}{r_2} = \frac{p}{q}$ for some coprime integers $p$ and $q$. Then
\begin{equation*}
\ell_{\{\Phi_n\}}(r_1,r_2) = \left\{
\begin{array}{lc}
  \frac{1}{2}, \ {\rm if}\  p\text{ or } q \text{ is even}, \ \text{ or if} \ p,q \text{ and }  \frac{(p-q)(d-1)}2 \text{ are odd}, \medskip \\
 \frac{1}{2} - \frac{1}{2pq} (-1)^{\frac{p+q}2 + \frac{(p-q)(d-1)}4}, \ \text{otherwise}.
\end{array}
\right.
\end{equation*}
\end{proposition}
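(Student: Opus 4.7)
The plan is to recognize the quoted asymptotic for $L_n^\nu(2\pi x^2)e^{-\pi x^2}$ as fitting the template of Theorem \ref{mainthm} on the positive half-line, with the identifications
\[
\mu_n:=\sqrt{\tfrac{4n+2\nu+2}{2\pi}},\quad \varphi(r):=r,\quad \theta:=\tfrac{2\nu+1}{8}=\tfrac{d-1}{8},\quad \phi(r,n):=\tfrac{n^{\nu/2-1/4}}{\sqrt{\pi}(2\pi)^{\nu/2+1/4}r^{\nu+1/2}}.
\]
For any $r_1,r_2>0$ one has $\phi(r_1,n)\phi(r_2,n)>0$, so the sign condition of Theorem \ref{mainthm} is automatic, and $\varphi(r_1)\neq\pm\varphi(r_2)$ is equivalent to $r_1\neq r_2$.

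Next I would verify the equidistribution hypothesis of Theorem \ref{mainthm}. For each fixed $r>0$ and nonzero integer $k$, the sequence $\{k^{-1}\mu_n r\}$ is of the form $\{c\sqrt{n+\eta}\}$ for constants $c\neq 0$ and $\eta\in\R$ depending on $r,k,\nu$. Consecutive differences $c(\sqrt{n+1+\eta}-\sqrt{n+\eta})$ tend to $0$, so van der Corput's difference theorem (equivalently, the classical equidistribution modulo $1$ of $\{c\sqrt n\}$) yields the required equidistribution. Theorem \ref{mainthm} then applies, and formula \eqref{eq:TL} from its proof specializes to
\[
\ell_{\{\Phi_n\}}(r_1,r_2)=\tfrac12+\tfrac12\int_0^1\Phi(pt-\theta,qt-\theta)\,\d t,\qquad \theta=\tfrac{d-1}{8}.
\]
Invoking Lemma \ref{lem:Fourier} with $A=r_1$, $B=r_2$, $\alpha=\beta=\theta$ immediately yields $\ell_{\{\Phi_n\}}(r_1,r_2)=\tfrac12$ whenever $p$ or $q$ is even.

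When $p,q$ are both odd, Lemma \ref{lem:Fourier} expresses the integral above as
\[
(-1)^{(p+q)/2+1}\frac{8}{\pi^2 pq}\sum_{\ell=0}^{\infty}\frac{1}{(2\ell+1)^2}\cos\!\Bigl(\tfrac{\pi(2\ell+1)m}{2}\Bigr),\qquad m:=\tfrac{(p-q)(d-1)}{2}\in\Z,
\]
where $m$ is integral because $p-q$ is even. I would then split on the parity of $m$. Using the elementary identity $\cos(\pi(2\ell+1)k)=(-1)^k$ valid for every $k\in\Z$, the cosine sum vanishes when $m$ is odd, giving $\ell_{\{\Phi_n\}}(r_1,r_2)=\tfrac12$ and recovering the second ``$\tfrac12$'' clause of the statement. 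When $m=2k$ is even, the same identity reduces the cosine sum to $(-1)^k\cdot \pi^2/8$ via the Basel-type identity $\sum_{\ell\geq 0}(2\ell+1)^{-2}=\pi^2/8$. Substituting back, with $k=(p-q)(d-1)/4$, yields
\[
\ell_{\{\Phi_n\}}(r_1,r_2)=\tfrac12-\tfrac{1}{2pq}\,(-1)^{(p+q)/2+(p-q)(d-1)/4},
\]
which is exactly the non-trivial clause of the proposition.

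The only non-routine point is the sign bookkeeping in the cosine-sum evaluation, so that the exponent $(p+q)/2+(p-q)(d-1)/4$ emerges in the correct form; modulo that, the proposition is a clean specialization of Lemma \ref{lem:Fourier} and Theorem \ref{mainthm}.
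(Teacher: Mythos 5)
Your proposal is correct and follows essentially the same route as the paper, which disposes of this proposition in one line as ``a direct application of Lemma \ref{lem:Fourier} with $\alpha=\beta=\tfrac{d-1}{8}$'': you feed the Laguerre asymptotic into the machinery of Theorem \ref{mainthm}, reduce to $\tfrac12+\tfrac12\int_0^1\Phi(pt-\theta,qt-\theta)\,\d t$, and your evaluation of the cosine sum and the resulting sign bookkeeping reproduce the stated formula exactly. One small correction to your equidistribution step: the fact that consecutive differences of $c\sqrt{n+\eta}$ tend to zero does not by itself give equidistribution (consider $\log n$), and van der Corput's difference theorem requires the difference sequences themselves to equidistribute, which they do not here; the correct tool is Fej\'er's theorem, which additionally uses that $n(a_{n+1}-a_n)\to\infty$ --- the classical equidistribution of $\{c\sqrt{n}\}$ modulo $1$ that you invoke parenthetically is of course true and suffices.
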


\subsection{Sets of bounded remainder}
In this final section, we describe a  curious phenomenon which was discovered by accident. Consider the family of Chebyshev polynomials of the first kind on the interval $[-1,1]$, denoted $\{T_n\}_{n\geq 0}$ and defined via $T_n(x):=\cos(n\arccos x)$. 
This turns out to be one of the extremal examples for  Theorem \ref{mainthm} since
$$ \# \left\{0 \leq n < N: \sgn T_n(\cos{(\tfrac{2\pi}{10})}) = \sgn T_n(\cos{(3\tfrac{2\pi}{10})})\right\} = (1+o_N(1)) \tfrac{N}{3}.$$

Indeed, the arising quantities simplify to 
$$ T_n(\cos{(\tfrac{2\pi}{10})}) = \cos{(\tfrac{2\pi n}{10})} \qquad \mbox{and} \qquad  T_n(\cos{(3\tfrac{2\pi}{10})}) = \cos{(3\tfrac{ 2\pi n}{10})},$$
and both sequences $\{\frac{2\pi n}{10}\}$ and $\{3\frac{ 2\pi n}{10}\}$ are equidistributed modulo 1. 
If we go one step further and try to understand the error term, we encounter the following surprising phenomenon. At least for $N \leq 10^4$, we used  {\it Mathematica} to verify that
$$ \left| \# \left\{0 \leq n < N: \sgn T_n(\cos{(\tfrac{2\pi}{10})}) = \sgn T_n(\cos{(3\tfrac{ 2\pi}{10})})\right\} - \tfrac{N}{3} \right| \leq 10.$$
This is probably related to the fine structure of Kronecker sequences \cite{hecke, kesten, sos}, and
one cannot hope for such strong results in general.
This is reminiscent of exciting new developments in the theory of continuous flows on the torus,  Jozs\'{e}f Beck's superuniformity theory \cite{beck0, beckh, beck, beck2}, that may have nontrivial implications to the present context. We also refer to a related paper by Grepstad \& Larcher on sets of bounded remainder \cite{grep}, which seems to provide further interesting directions of research.




\begin{thebibliography}{99}

\bibitem{beck0} \textsc{J. Beck},
\newblock Strong uniformity. Uniform distribution and quasi-Monte Carlo methods, 17--44, 
\newblock Radon Ser. Comput. Appl. Math. {\bf 15}, De Gruyter, Berlin, 2014. 

\bibitem{beckh} \textsc{J. Beck}, 
\newblock Super-uniformity of the typical billiard path. An irregular mind, 39--129, 
\newblock Bolyai Soc. Math. Stud. {\bf 21}, J\'anos Bolyai Math. Soc., Budapest, 2010. 

\bibitem{beck} \textsc{J. Beck}, 
\newblock From Khinchin's conjecture on strong uniformity to superuniform motions.
\newblock Mathematika {\bf 61} (2015), no.~3, 591--707. 

\bibitem{beck2} \textsc{J. Beck},
\newblock Dimension-free uniformity with applications, I. 
\newblock Mathematika {\bf 63} (2017), no.~3, 734--761. 

\bibitem{dick} \textsc{J. Dick and F. Pillichshammer}, 
\newblock Digital nets and sequences. Discrepancy theory and quasi-Monte Carlo integration. 
\newblock Cambridge University Press, Cambridge, 2010.

\bibitem{drmota} \textsc{M. Drmota and R. Tichy}, 
\newblock Sequences, discrepancies and applications. 
\newblock Lecture Notes in Mathematics, 1651. Springer-Verlag, Berlin, 1997.

\bibitem{fefferman} \textsc{C. Fefferman}, 
\newblock The uncertainty principle. 
\newblock Bull. Amer. Math. Soc. {\bf 9} (1983), no.~2, 129--206.

\bibitem{us3} \textsc{F. Gon\c{c}alves, D. Oliveira e Silva and S. Steinerberger}, 
\newblock Hermite polynomials, linear flows on the torus, and an uncertainty principle for roots.
\newblock J. Math. Anal. Appl. {\bf 451} (2017), no. 2, 678--711.

\bibitem{grep} \textsc{S. Grepstad and G. Larcher}, 
\newblock Sets of bounded remainder for a continuous irrational rotation on $[0,1]^2$. 
\newblock Acta Arith. {\bf 176} (2016), no.~4, 365--395. 

\bibitem{hecke} \textsc{E. Hecke}, 
\newblock \"Uber analytische Funktionen und die Verteilung von Zahlen mod. eins.  
\newblock Abh. Math. Sem. Univ. Hamburg {\bf 1} (1922), no.~1, 54--76. 


\bibitem{kesten} \textsc{H. Kesten}, 
\newblock On a conjecture of Erd\"os and Sz\"usz related to uniform distribution mod 1. 
\newblock Acta Arith. {\bf 12} 1966/1967 193--212. 

\bibitem{kuipers} \textsc{L. Kuipers and H. Niederreiter}, 
\newblock Uniform distribution of sequences.
\newblock Pure and Applied Mathematics. Wiley-Interscience, New York-London-Sydney, 1974.

\bibitem{RS72} 
\textsc{M. Reed and B. Simon}, 
\newblock Methods of modern mathematical physics. I. Functional analysis. 
\newblock Academic Press, New York-London, 1972.

\bibitem{sos} \textsc{V. T. S\'os}, 
\newblock On the distribution mod 1 of the sequence $n \alpha$, 
\newblock Ann. Univ. Sci. Budapest, E\"otv\"os Sect. Math. {\bf 1} (1958), 127--134.


\bibitem{stein} \textsc{S. Steinerberger}, 
\newblock Maximizing smooth functions over toroidal geodesics.
\newblock arXiv:1805.02601. To appear in Bull. Aust. Math. Soc.

\bibitem{Sz75} \textsc{G. Szeg\"o}, 
\newblock Orthogonal polynomials. 
Fourth edition. American Mathematical Society, Colloquium Publications, Vol. XXIII. 
\newblock American Mathematical Society, Providence, R.I., 1975. 



\bibitem{gron1} \textsc{T. Tao}, 
\newblock
Nonlinear dispersive equations. Local and global analysis. 
\newblock CBMS Regional Conference Series in Mathematics, 106. Published for the Conference Board of the Mathematical Sciences, Washington, DC; by the American Mathematical Society, Providence, RI, 2006.


\end{thebibliography}
\end{document}